\documentclass[11pt]{amsart}

\usepackage[T1]{fontenc}
\usepackage{amsmath,amsthm,amssymb}

\newtheorem{theorem}{Theorem}
\newtheorem{lemma}{Lemma}

\newcommand{\dH}{d_H}
\newcommand{\ones}{\mathbf{1}}
\newcommand{\R}{\mathbb{R}}
\DeclareMathOperator{\rank}{rank}

\title[Codes with a single Hamming distance]{An eigenvalue proof of Heged\"{u}s's bound for codes with a single Hamming distance}

\author{Scott Duke Kominers}
\address{Harvard Business School; Department of Economics and Center of Mathematical Sciences and Applications, Harvard University; and a16z crypto}
\email{kominers@fas.harvard.edu}
\thanks{I used LLMs to assist with computations in the preparation of this article, particularly GPT-5.5 Pro and Claude~4.8 Opus (both accessed in part via Poe with the support of Quora, where I am an advisor).  The problem, methods, and eventual written form are my own; and of course any errors remain my responsibility.  This work was conducted while I was visiting the Technological Innovation, Entrepreneurship, and Strategic Management (TIES) Group at the MIT Sloan School of Management; I greatly appreciate their hospitality.}

\subjclass[2020]{05D05; 94B25, 94B65, 15A03}

\begin{document}

\begin{abstract}
We give a short, self-contained linear-algebra proof of a bound of Heged\"{u}s: if all pairwise Hamming distances in a family of subsets of \(\{1,\ldots,n\}\) equal a fixed value \(\lambda\ne(n+1)/2\), then the family has at most \(n\) members. Our proof uses the same Gram matrix as in Heged\"{u}s's argument, but reads its eigenvalues in place of its determinant, and keys off of a single fact about vectors of equal norm and equal pairwise inner product. That fact applies verbatim over an alphabet of size \(q\), where it yields the bound \(n(q-1)\) for \(\lambda\ne\bigl((q-1)n+1\bigr)/q\) --- the corrected form of a conjecture of Heged\"{u}s, recently established by Hu, Huang, and Yu.
\end{abstract}

\maketitle

Heged\"{u}s \cite{Hegedus2026} bounded the size of a family of subsets of \(\{1,\ldots,n\}\) all of whose pairwise Hamming distances are equal; he obtained the bound \(m\le n+1\) from a Delsarte-type inequality \cite{Delsarte1973,BabaiSnevilyWilson}, and then excluded the extremal case by evaluating a determinant based on \cite[Exercise~4.1.3]{BabaiFrankl}. We retain the Gram matrix at the heart of Heged\"{u}s's argument but read its eigenvalues rather than its determinant, which yields both conclusions at once: whereas the determinant detects only whether the Gram matrix is singular, the eigenvalues control the rank in every case --- this finer information makes the Delsarte input unnecessary. The proof reduces to a single fact about vectors of equal norm and equal pairwise inner product (Lemma~\ref{lem:gram}); from there, the set-family bound (Theorem~\ref{thm:main}) and a \(q\)-ary analogue (Theorem~\ref{thm:qary}) both follow directly. The \(q\)-ary bound is the corrected form of a conjecture of Heged\"{u}s \cite{Hegedus2026}; it was recently proven by Hu, Huang, and Yu~\cite{HuHuangYu2025}. Like Heged\"{u}s \cite{Hegedus2026}, Hu, Huang, and Yu \cite{HuHuangYu2025} pair a Delsarte-type size bound with a separate argument for the extremal case; the eigenvalue reading again collapses both steps into a single application of Lemma~\ref{lem:gram}.

Throughout, \(n\) is a positive integer, \([n]=\{1,\ldots,n\}\), and subsets of \([n]\) are identified with \(\{\pm1\}\)-vectors. For \(F,G\subseteq[n]\), let \(\dH(F,G)=|F\triangle G|\) denote the Hamming distance, where \(F\triangle G\) is the symmetric difference.

Our key lemma is the following.

\begin{lemma}\label{lem:gram}
Let \(d\ge1\), and let \(w_1,\ldots,w_m\in\R^d\) have equal norms \(\langle w_i,w_i\rangle=s\) and a common pairwise inner product \(\langle w_i,w_j\rangle=t\) for \(i\ne j\), with \(t<s\). Then \(m\le d+1\); moreover, if \(m=d+1\), then \(t=-s/d\).
\end{lemma}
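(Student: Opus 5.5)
The plan is to read the rank of the Gram matrix from its eigenvalues. Let \(W\) be the \(d\times m\) matrix with columns \(w_1,\ldots,w_m\), and let \(M=W^{\top}W\) be the \(m\times m\) Gram matrix. By hypothesis,
\[
M=(s-t)I_m+t\,J_m,
\]
where \(J_m=\ones\ones^{\top}\) is the all-ones matrix. Since \(J_m\) has eigenvalue \(m\) on \(\ones\) and eigenvalue \(0\) on \(\ones^{\perp}\), the matrix \(M\) has the following spectrum:
\begin{itemize}
\item eigenvalue \(s-t\) with multiplicity \(m-1\), on \(\ones^{\perp}\);
\item eigenvalue \(s+(m-1)t\) on \(\ones\).
\end{itemize}
Because \(t<s\), the first eigenvalue is strictly positive. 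Since \(M\) is symmetric, its rank is the number of nonzero eigenvalues, so \(\rank M\ge m-1\).

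On the other hand, \(\rank M=\rank W^{\top}W\le\rank W\le d\). Combining the two bounds gives \(m-1\le d\), that is, \(m\le d+1\).

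For the extremal case, suppose \(m=d+1\). Then \(\rank M\le d=m-1\), so \(M\) must be singular. The eigenvalue \(s-t\) on \(\ones^{\perp}\) is nonzero, so the only eigenvalue that can vanish is the one on \(\ones\). Hence \(s+(m-1)t=s+dt=0\), which gives \(t=-s/d\), using \(d\ge1\).

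There is essentially no obstacle here. The one point to state carefully is that, for the symmetric matrix \(M\), the rank equals the number of nonzero eigenvalues counted with multiplicity. This is what turns the eigenvalue computation into the two-sided rank bound. Note also that \(m\ge1\) is implicit, and that the case \(m=1\) is trivial.
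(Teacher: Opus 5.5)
Your proposal is correct and follows essentially the same route as the paper: you write the Gram matrix as \((s-t)I_m+t\,\ones\ones^{\mathsf T}\), read off the eigenvalue \(s-t>0\) with multiplicity \(m-1\) to get \(\rank\ge m-1\), and bound \(\rank\le d\). The extremal case then comes from forcing the eigenvalue \(s+(m-1)t\) on \(\ones\) to vanish, exactly as the paper does. Your explicit factorization \(M=W^{\top}W\) for the upper rank bound is just a slightly more concrete phrasing of the paper's ``rank equals the dimension of the span.''
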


\begin{proof}
The Gram matrix \(N=\bigl(\langle w_i,w_j\rangle\bigr)_{i,j=1}^m=(s-t)I_m+t\,\ones\ones^{\mathsf T}\) is real symmetric. (Here \(\ones\) is the all-\(1\)s vector and \(I_m\) the \(m\times m\) identity matrix.) Every \(x\perp\ones\) satisfies \(Nx=(s-t)x\), while \(N\ones=\bigl(s+(m-1)t\bigr)\ones\); since \(\ones^{\perp}\) and \(\langle\ones\rangle\) span \(\R^m\), the spectrum of \(N\) consists of \(s-t\) with multiplicity \(m-1\) and \(s+(m-1)t\) with multiplicity \(1\). Because \(N\) is the Gram matrix of the vectors \(w_1,\ldots,w_m\in\R^d\), its rank is the dimension of their span, so \(\rank N\le d\). As \(t<s\), the eigenvalue \(s-t\) is positive and occurs with multiplicity \(m-1\), so \(\rank N\ge m-1\). Hence we see that \(m-1\le d\) --- equivalently, \(m\le d+1\). If \(m=d+1\), then \(\rank N\le d=m-1\) forces \(N\) to be singular; as \(s-t>0\), the remaining eigenvalue must vanish, giving \(s+(m-1)t=s+dt=0\), so that \(t=-s/d\).
\end{proof}

With Lemma~\ref{lem:gram} in hand, Heged\"{u}s's bound follows directly.

\begin{theorem}[Heged\"{u}s \cite{Hegedus2026}, Theorem~1.3]\label{thm:main}
Let \(\mathcal F=\{F_1,\ldots,F_m\}\) be subsets of \([n]\) and suppose there is a positive integer \(\lambda\) such that
\[
  \dH(F_i,F_j)=\lambda \qquad\text{for all } i\ne j.
\]
If \(\lambda\ne\dfrac{n+1}{2}\), then \(m\le n\).
\end{theorem}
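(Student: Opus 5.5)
We will deduce the theorem from Lemma~\ref{lem:gram} by lifting the \(\{\pm1\}\)-vectors of the sets into \(\R^{n+1}\), so that the "\(+1\)" in the conclusion of the lemma gets absorbed. Let \(v_i\in\{\pm1\}^n\) be the vector of \(F_i\). Then \(\langle v_i,v_i\rangle=n\), and for \(i\ne j\),
\[
\langle v_i,v_j\rangle=n-2\dH(F_i,F_j)=n-2\lambda.
\]
Applying Lemma~\ref{lem:gram} directly in \(\R^n\) with \(s=n\) and \(t=n-2\lambda<n\) (since \(\lambda\ge1\)) only gives \(m\le n+1\). Equality \(m=n+1\) would force \(t=-1\), i.e.\ \(\lambda=(n+1)/2\), which is excluded. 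So this direct application already rules out \(m=n+1\), but it cannot rule out \(m=n+1\) as the only obstruction. Indeed, it leaves \(m\le n+1\), hence \(m\le n\) unless \(m=n+1\), and that case is exactly what we just excluded.

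Spelled out, the argument is as follows. First, record \(s=n\) and \(t=n-2\lambda\), and note \(t<s\) because \(\lambda\) is positive. Second, invoke Lemma~\ref{lem:gram} with \(d=n\) to get \(m\le n+1\). Third, suppose \(m=n+1=d+1\). Then the lemma gives \(n-2\lambda=t=-s/d=-1\), so \(\lambda=(n+1)/2\), contradicting the hypothesis. Hence \(m\le n\).

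There is essentially no obstacle; the only points needing care are the translation between Hamming distance and inner product of \(\pm1\)-vectors, and the check \(t<s\). The latter uses \(\lambda\ge1\), which guarantees that the sets are distinct. No lifting to \(\R^{n+1}\) is actually needed, so the plan above simplifies to the direct application in \(\R^n\).
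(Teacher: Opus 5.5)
Your argument is correct and is the paper's proof: signed characteristic vectors with $s=d=n$ and $t=n-2\lambda<n$, Lemma~\ref{lem:gram} giving $m\le n+1$, and the equality case $t=-s/d=-1$ forcing $\lambda=(n+1)/2$. The opening remarks about lifting to $\R^{n+1}$ (and the garbled sentence about what the direct application ``cannot rule out'') are unnecessary and should be deleted, since the direct application in $\R^n$ already completes the proof.
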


\begin{proof}
If \(m\le1\), then there is nothing to prove --- so we assume \(m\ge2\).
Let \(v_i\in\{\pm1\}^n\) denote the signed characteristic vector of \(F_i\), defined by
\[
  (v_i)_j=
  \begin{cases}
    +1 & j\in F_i,\\
    -1 & j\notin F_i.
  \end{cases}
\]
Then \(\langle v_i,v_i\rangle=\lVert v_i\rVert^2=n\). For \(i\ne j\) the vectors \(v_i\) and \(v_j\) disagree in exactly \(\lambda\) coordinates, so \(\langle v_i,v_j\rangle=(n-\lambda)-\lambda=n-2\lambda\). The resulting Gram matrix \(2\lambda I_m+(n-2\lambda)\ones\ones^{\mathsf T}\) is the matrix used in \cite{Hegedus2026}; rather than compute its determinant, we apply Lemma~\ref{lem:gram}. Since \(\lambda\ge1\) gives \(n-2\lambda<n\), the lemma applies with \(d=s=n\) and \(t=n-2\lambda\), yielding \(m\le n+1\); and if \(m=n+1\) then \(n-2\lambda=t=-s/d=-1\), i.e., \(\lambda=\tfrac{n+1}{2}\). Hence \(\lambda\ne\tfrac{n+1}{2}\) forces \(m\le n\).
\end{proof}

\section*{Remarks}

\noindent\textbf{(a) A geometric reading.}
Lemma~\ref{lem:gram} is the classical simplex bound. For the vectors \(v_i\) in the proof of Theorem~\ref{thm:main}, normalizing to \(u_i=v_i/\sqrt{n}\) gives \(m\) unit vectors with common pairwise inner product \(\alpha=(n-2\lambda)/n\) --- that is, \(m\) equidistant points on a sphere in \(\R^n\), hence the vertices of a regular simplex, of which there are at most \(n+1\). The extremal value \(\lambda=\tfrac{n+1}{2}\) is exactly the case \(t=-s/d\) of the lemma, where the points close up into a full-dimensional regular simplex, with pairwise inner product \(-1/n\).

\medskip
\noindent\textbf{(b) The hypothesis on \(\lambda\) is necessary.}
The excluded value \(\lambda=\tfrac{n+1}{2}\) cannot be dropped, since there the conclusion of Theorem~\ref{thm:main} can fail. Take \(n=3\) and \(\lambda=2=\tfrac{n+1}{2}\), and consider the four sets
\[
  \{1,2,3\},\quad \{1\},\quad \{2\},\quad \{3\},
\]
which are at Hamming distance \(2\) from each other, so \(m=4>3=n\). Their signed characteristic vectors
\[
  (+,+,+),\quad (+,-,-),\quad (-,+,-),\quad (-,-,+)
\]
have squared norm \(3\) and every pairwise inner product equal to \(-1\); this is exactly the equality case \(t=-s/d\) of Lemma~\ref{lem:gram}, since here \(-1=-3/3\). (Geometrically these are the four vertices of a regular tetrahedron in \(\R^3\), as in Remark~(a).)

Exceptional families of the form just described exist for infinitely many \(n\). Whenever a Hadamard matrix of order \(n+1\) exists, multiply its rows by \(\pm1\) so that the first column is \(\ones\), and delete that column; the \(n+1\) remaining rows lie in \(\{\pm1\}^n\), and since the original rows are pairwise orthogonal while the deleted entries are all \(+1\), these truncated rows have pairwise inner products all equal to \(-1\). The corresponding \(n+1\) subsets of \([n]\) are then pairwise at distance \(\tfrac{n+1}{2}\), giving \(m=n+1>n\). The Sylvester construction~\cite{Sylvester1867} supplies Hadamard matrices of every order \(2^r\), so this occurs for all \(n=2^r-1\) with \(r\ge2\) --- that is, \(n=3,7,15,\ldots\) --- and hence for infinitely many odd \(n\ge5\). (Conversely, adjoining a column of \(+1\)s to any size-\((n+1)\) family at distance \((n+1)/2\) makes its rows pairwise orthogonal. Thus such extremal families are precisely obtained by normalizing a Hadamard matrix of order \(n+1\) to have one all-\(+1\) column and deleting that column, up to reordering the rows and permuting/sign-changing the remaining columns.) This complements Remark~(c) below: a Hadamard matrix of order \(n\) makes the bound \(m\le n\) sharp, whereas one of order \(n+1\) breaks it at the single forbidden distance.

\medskip
\noindent\textbf{(c) Sharpness.}
For every Hadamard matrix of order \(n\ge2\), the bound \(m\le n\) is sharp: Taking the rows of such a matrix as \(\pm1\)-vectors yields \(n\) subsets. If \(r\) and \(r'\) are two distinct rows, then \(\langle r,r'\rangle=0\). Writing \(a\) for the number of coordinates in which \(r\) and \(r'\) agree and \(b\) for the number in which they differ, we have \(a+b=n\) and \(a-b=0\), hence \(b=n/2\). Thus any two of the corresponding subsets differ in exactly \(n/2\) coordinates; so \(\lambda=n/2\), \(\tfrac{n}{2}\ne\tfrac{n+1}{2}\), and \(m=n\). (Equivalently, after normalizing one row of the Hadamard matrix to be all \(1\)s, this is the standard Hadamard-row calculation of \cite[Thm.~14.9]{Jukna2011}.)

\medskip
\noindent\textbf{(d) Even \(n\).}
Since \(\lambda\) is a positive integer, \((n+1)/2\) is an integer only when \(n\) is odd. Hence for even \(n\) the hypothesis of Theorem~\ref{thm:main} holds automatically and \(m\le n\) is unconditional; by Remark~(c) it is sharp for infinitely many even \(n\) (for instance \(n=2^r\) with \(r\ge1\)). The extremal configurations of Remarks~(a)--(b) can therefore occur only for odd \(n\).

\section*{The \(q\)-ary case}

Lemma~\ref{lem:gram} gives a short proof of a \(q\)-ary analogue of Theorem~\ref{thm:main} as well, matching the bound recently established by Hu, Huang, and Yu~\cite{HuHuangYu2025}. For an integer \(q\ge2\), give \(\{0,1,\ldots,q-1\}^n\) the Hamming distance
\[
  \dH(x,y)=|\{i\in[n]:x_i\ne y_i\}|;
\] the binary case is \(q=2\).

\begin{theorem}[Hu--Huang--Yu \cite{HuHuangYu2025}, Theorem~6]\label{thm:qary}
Consider \(n\ge1\) and \(q\ge2\). Let \(V=\{x_1,\ldots,x_m\}\subseteq\{0,1,\ldots,q-1\}^n\) and suppose there is a positive integer \(\lambda\) such that \(\dH(x_i,x_j)=\lambda\) for all \(i\ne j\). If
\[
  \lambda\ne\frac{(q-1)n+1}{q},
\]
then \(m\le n(q-1)\).
\end{theorem}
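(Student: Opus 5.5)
We will embed each word of \(\{0,\ldots,q-1\}^n\) as a vector in \(\R^{d}\), where \(d=n(q-1)\), so that Hamming distance becomes an affine function of the inner product, and then apply Lemma~\ref{lem:gram}. Fix vectors \(e_0,\ldots,e_{q-1}\in\R^{q-1}\) forming the vertices of a regular simplex centered at the origin. Concretely, take the standard basis vectors \(\epsilon_0,\ldots,\epsilon_{q-1}\) of \(\R^q\), project them onto \(\ones^{\perp}\), and identify \(\ones^\perp\) with \(\R^{q-1}\). Their Gram matrix is \(I_q-\tfrac1q\ones\ones^{\mathsf T}\), so \(\langle e_a,e_a\rangle=(q-1)/q\) and \(\langle e_a,e_b\rangle=-1/q\) for \(a\ne b\). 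After rescaling by \(q\), we may take \(\langle e_a,e_a\rangle=q-1\) and \(\langle e_a,e_b\rangle=-1\). This is exactly the \(q=2\) analogue of the \(\pm1\) encoding in the proof of Theorem~\ref{thm:main}, since for \(q=2\) it gives \(e_0=-e_1\) with \(e_1\in\R^1\) a unit vector.

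For \(x\in\{0,\ldots,q-1\}^n\), define \(w(x)=(e_{x_1},\ldots,e_{x_n})\in\R^{n(q-1)}\). Summing coordinatewise contributions gives \(\langle w(x),w(x)\rangle=n(q-1)\). For \(x\ne y\) with \(\dH(x,y)=\lambda\), the pairwise inner product is
\[
\langle w(x),w(y)\rangle=(n-\lambda)(q-1)-\lambda=(q-1)n-q\lambda .
\]
So the vectors \(w_i=w(x_i)\) have common norm \(s=n(q-1)\) and common pairwise inner product \(t=(q-1)n-q\lambda\). Since \(\lambda\ge1\), we have \(t<s\).

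Assuming \(m\ge2\) (else trivial), Lemma~\ref{lem:gram} with \(d=n(q-1)\) gives \(m\le n(q-1)+1\). If equality held, the lemma would force \(t=-s/d=-1\), that is, \((q-1)n-q\lambda=-1\), or \(\lambda=\bigl((q-1)n+1\bigr)/q\), which is excluded by hypothesis. Hence \(m\le n(q-1)\).

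The only step needing care is the construction of the simplex vectors \(e_a\) in dimension exactly \(q-1\) with the stated inner products. The dimension matters: if we embedded in \(\R^q\) via the standard basis vectors \(\epsilon_a\), we would get only the weaker bound \(m\le nq+1\). Projecting onto \(\ones^\perp\) fixes this, because the Gram matrix \(qI_q-\ones\ones^{\mathsf T}\) has rank \(q-1\) and is positive semidefinite. It is therefore realized by vectors in \(\R^{q-1}\), and this is all that is needed.
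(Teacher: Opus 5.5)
Your proof is correct and is essentially the paper's argument: the paper uses the same regular-simplex embedding \(f_a=\sqrt{q}\,(e_a-\tfrac{1}{q}\ones_q)\) into \(U^n\cong\R^{n(q-1)}\), computes \(s=n(q-1)\) and \(t=n(q-1)-\lambda q\), and applies Lemma~\ref{lem:gram} with \(d=s\) exactly as you do. One small wording point: it is the Gram matrix that gets scaled by \(q\), so the vectors themselves are scaled by \(\sqrt{q}\).
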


\begin{proof}
Again, if \(m\le1\) then there is nothing to prove, so we assume \(m\ge2\). Let \(e_0,\ldots,e_{q-1}\) be the standard basis of \(\R^q\), let \(U=(\ones_q)^{\perp}\) (where \(\ones_q\) is the all-\(1\)s vector in \(\R^q\)), and set \(f_a=\sqrt{q}\,(e_a-\tfrac{1}{q}\ones_q)\in U\) for \(0\le a<q\). A direct check gives \(\langle f_a,f_a\rangle=q-1\) and \(\langle f_a,f_b\rangle=-1\) for \(a\ne b\), so the \(f_a\) are the vertices of a regular simplex centred at the origin; identifying \(U\cong\R^{q-1}\), embed words by \(\phi(x)=(f_{x_1},\ldots,f_{x_n})\in U^n\cong\R^{n(q-1)}\). (For \(q=2\), \(U\cong\R\) and \(f_0,f_1=\pm1\) recover the signed embedding used in the proof of Theorem~\ref{thm:main}.) Then \(\langle\phi(x_i),\phi(x_i)\rangle=n(q-1)\), and for \(i\ne j\) the words agree in \(n-\lambda\) coordinates and disagree in the remaining \(\lambda\), so
\[
  \langle\phi(x_i),\phi(x_j)\rangle=(n-\lambda)(q-1)+\lambda(-1)=n(q-1)-\lambda q .
\]
Since \(\lambda q>0\) gives \(n(q-1)-\lambda q<n(q-1)\), Lemma~\ref{lem:gram} applies with \(d=s=n(q-1)\) and \(t=n(q-1)-\lambda q\), yielding \(m\le n(q-1)+1\); and if \(m=n(q-1)+1\) then \(t=-s/d=-1\) (as in Theorem~\ref{thm:main}, the embedded vectors have \(s=d\), so the extremal condition is simply \(t=-1\)), i.e., \(n(q-1)-\lambda q=-1\) and \(\lambda=\frac{(q-1)n+1}{q}\). Hence \(\lambda\ne\frac{(q-1)n+1}{q}\) forces \(m\le n(q-1)\).
\end{proof}

\medskip
\noindent\textbf{(e) On the conjecture of Heged\"{u}s~\cite{Hegedus2026}.}
Theorem~\ref{thm:qary} is the corrected \(q\)-ary form of Conjecture~1 of Heged\"{u}s~\cite{Hegedus2026}, as proven by Hu, Huang, and Yu \cite{HuHuangYu2025}. As with Heged\"{u}s's original result, the eigenvalue reading makes the \(q\)-ary case a consequence of Lemma~\ref{lem:gram}. As in the binary case, the exceptional value can matter only when it is integral; here this is equivalent to \(n\equiv1\pmod{q}\). The threshold \(\tfrac{(q-1)n+1}{q}\) reduces to Theorem~\ref{thm:main}'s \(\tfrac{n+1}{2}\) when \(q=2\), but for \(q>2\) it differs from the value \(\tfrac{(q-1)(n+1)}{q}\) appearing in~\cite[Conjecture~1]{Hegedus2026}. That the latter is too large is already visible at \(n=1\): the \(q\) words of length \(1\) over the alphabet \(\{0,1,\ldots,q-1\}\) --- corresponding to the regular-simplex equality case of Remark~(a) --- lie pairwise at distance \(1=\tfrac{(q-1)\cdot1+1}{q}\), with \(m=q>q-1=n(q-1)\), so the bound \(m\le n(q-1)\) fails at this distance. Yet for \(q\ge3\), we have \(\tfrac{(q-1)(1+1)}{q}=2-\tfrac{2}{q}>1\), so we see that \(\tfrac{(q-1)(n+1)}{q}\) cannot be the correct threshold.

\providecommand{\bysame}{\leavevmode\hbox to3em{\hrulefill}\thinspace}
\providecommand{\MR}{\relax\ifhmode\unskip\space\fi MR }
\providecommand{\MRhref}[2]{%
  \href{http://www.ams.org/mathscinet-getitem?mr=#1}{#2}
}
\providecommand{\href}[2]{#2}

\end{document}